\newtheorem{theorem}{Theorem}
\newtheorem{lemma}[theorem]{Lemma}
\theoremstyle{definition}
\newtheorem{definition}[theorem]{Definition}
\theoremstyle{remark}
\numberwithin{equation}{section}
\theoremstyle{plain}
\newtheorem{corollary}[theorem]{Corollary}
\newtheorem{proposition}[theorem]{Proposition}
\title{The Link Smoothing Game}
\author{Allison Henrich and Inga Johnson}
\begin{document}
\maketitle

\abstract{We introduce a topological combinatorial game called the Link Smoothing Game. The game is played on the shadow of a link diagram and legal moves consist of smoothing precrossings. One player's goal is to keep the diagram connected while the other player's goal is to disconnect the shadow. We make significant progress towards a complete classification of link shadows into outcome classes by capitalizing on the relationship between link shadows and the planar graphs associated to their checkerboard colorings.}

\section{Introduction.}
Recently, several topological combinatorial games related to knots were introduced in~\cite{CMJ}. Inspired by the discovery of these games, we introduce a new game called the \textbf{Link Smoothing Game}.

Suppose $D$ is a connected diagram of a link shadow, that is, a connected link diagram where under- and over-strand information is unspecified at the crossings.  Two players take turns selecting a precrossing (i.e. an undetermined crossing) of the shadow and replacing it with either a horizontal or a vertical smoothing, as in Figure~\ref{smooth}.  

\begin{figure}[htbp] 
\begin{center} \includegraphics[scale=.25]{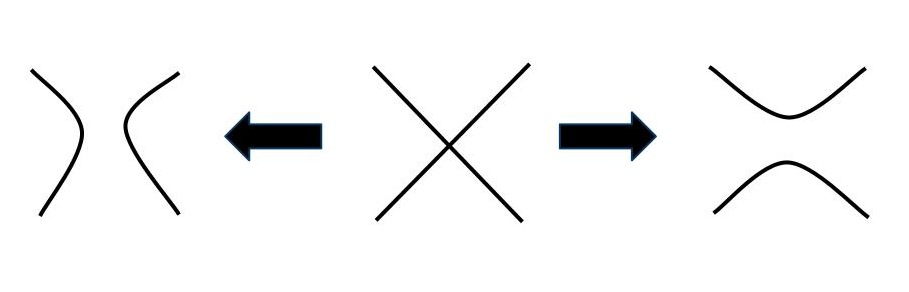}\end{center}
\caption{{\bf Smoothing a precrossing.}}
\label{smooth}
\end{figure}

The goal of one player is to keep the diagram connected while the goal of the other player is to disconnect the diagram.  The player with the goal of a keeping the diagram connected will be called Knot or $K$, while the other player will be called Link or $L$.  An example game is played in Figure~\ref{gameplay}.  

We note that the Link Smoothing Game is not a classical combinatorial game in which the winner is determined by who made the last move.  Rather it is a combinatorial game of a topological nature, as the goal is to change the game board into one having or not having a certain topological property.  In this game the topological property we consider is {\it connectedness}, but in a related game one could consider the property {\it $n$ or fewer connected components}.  Another class of topological combinatorial games played on the same game board requires players to resolve precrossings into crossings rather than smoothing.  Here the topological property being considered is {\it knottedness}. Games of this sort are discussed in~\cite{CMJ, Will}.

\begin{figure}[htbp] \begin{center}
\begin{tabular}{ccc}
\includegraphics[scale=.15]{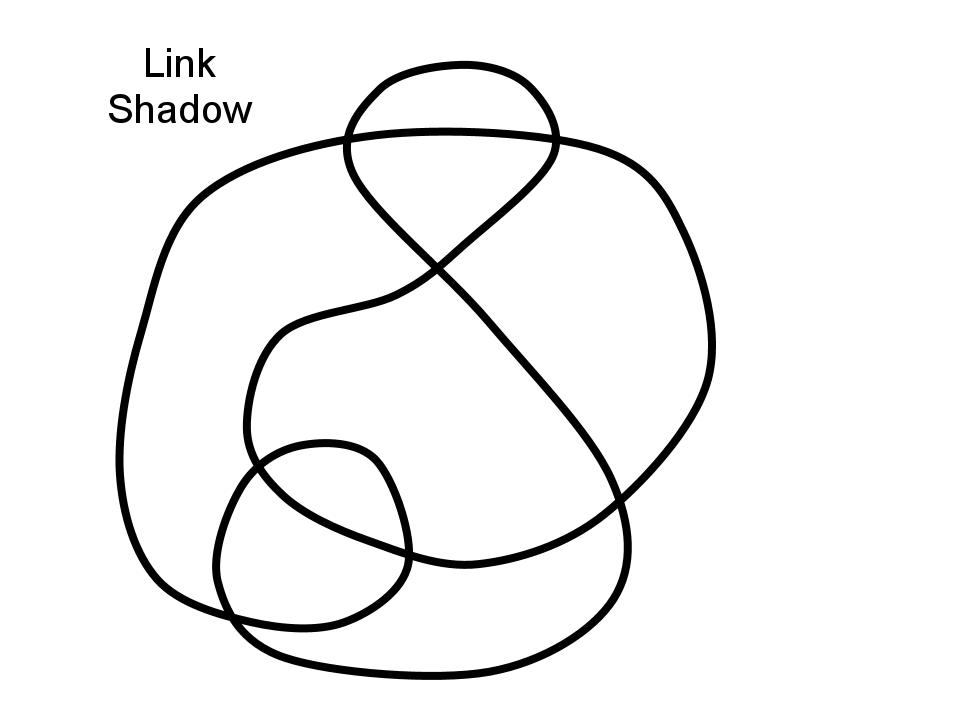}&\hspace{.8in} &\includegraphics[scale=.15]{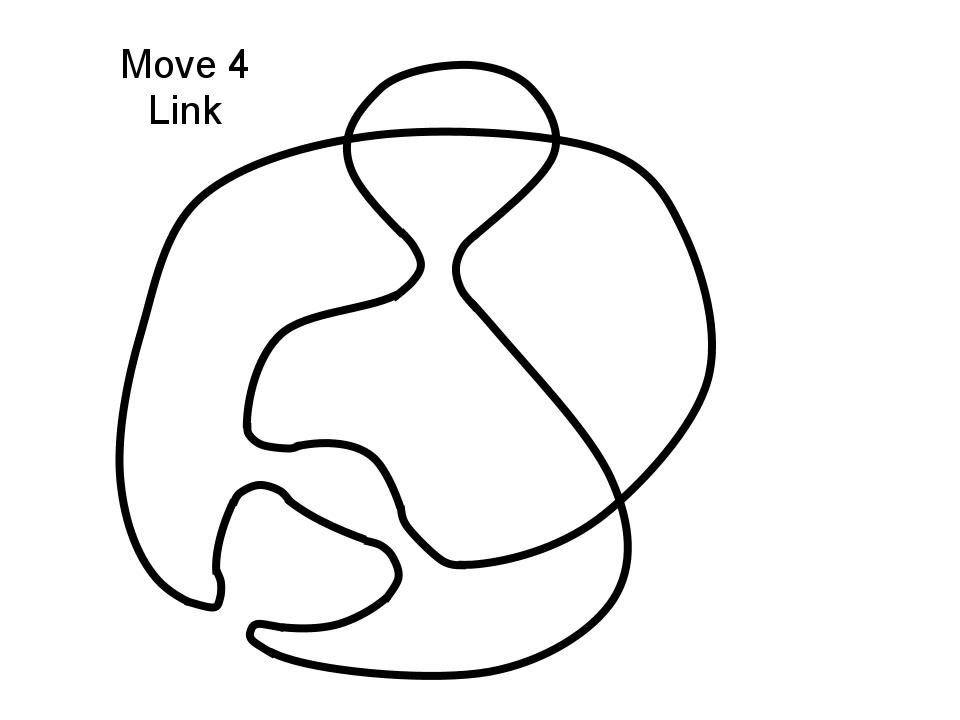}\\
\includegraphics[scale=.15]{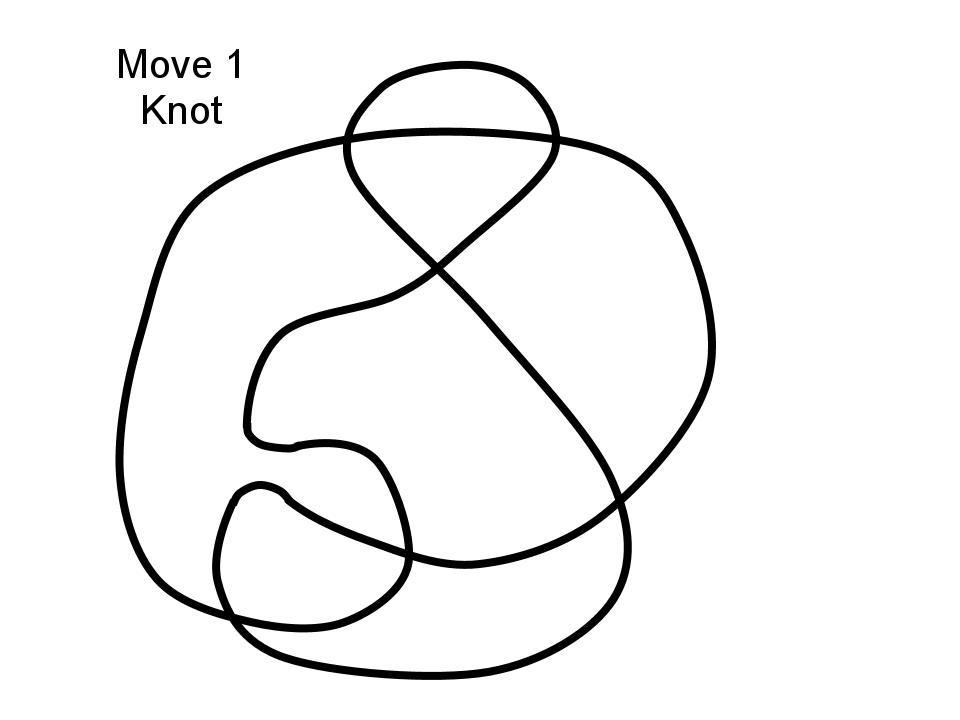}& &\includegraphics[scale=.15]{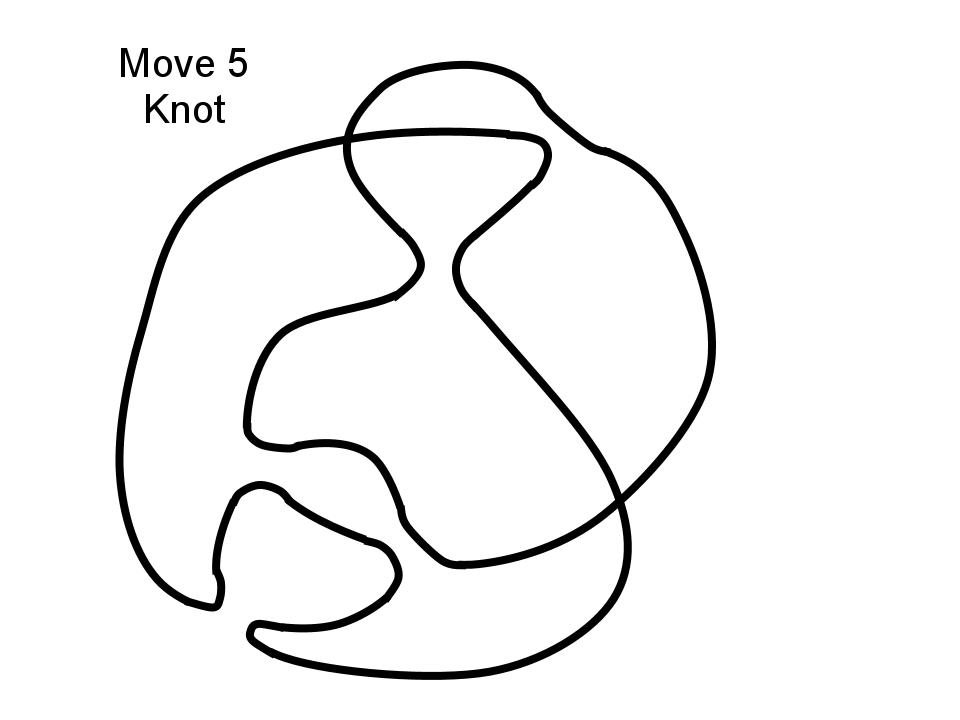}\\
\includegraphics[scale=.15]{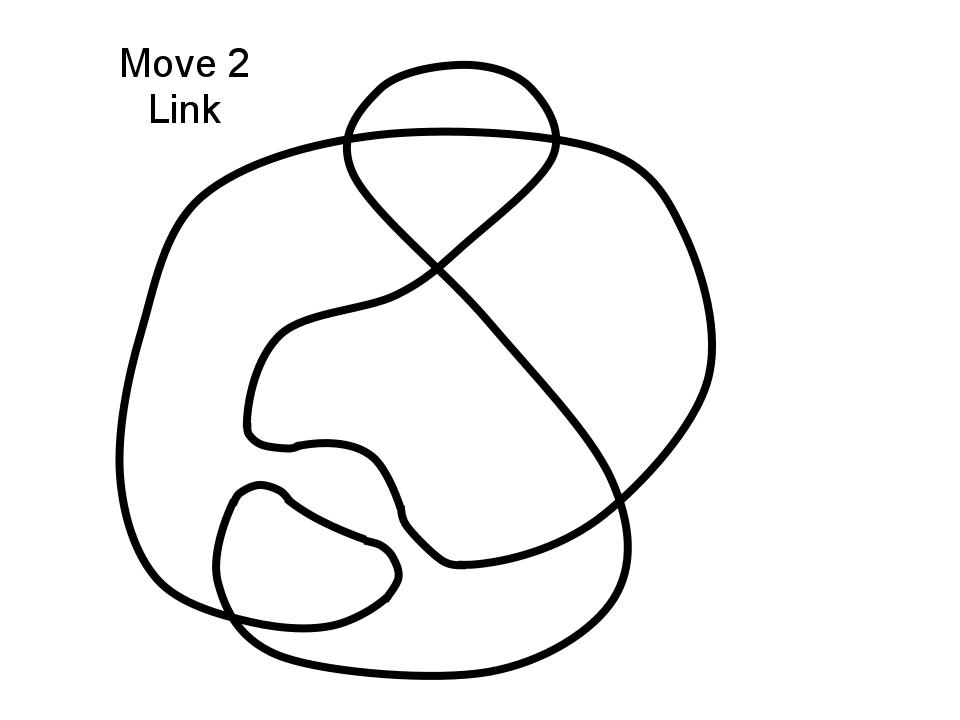}& &\includegraphics[scale=.15]{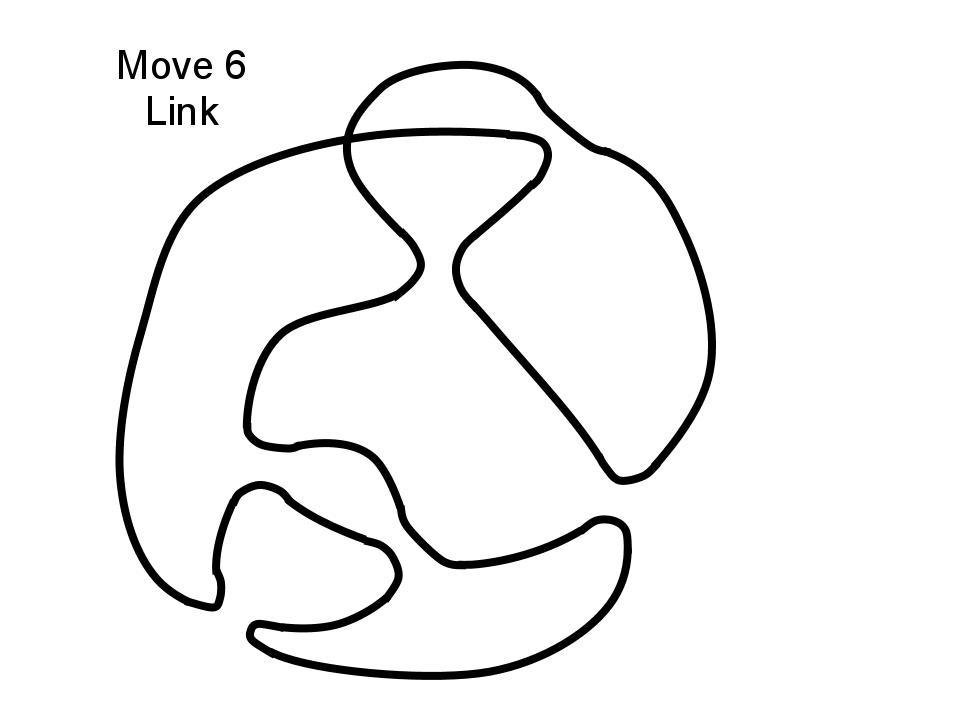}\\
\includegraphics[scale=.15]{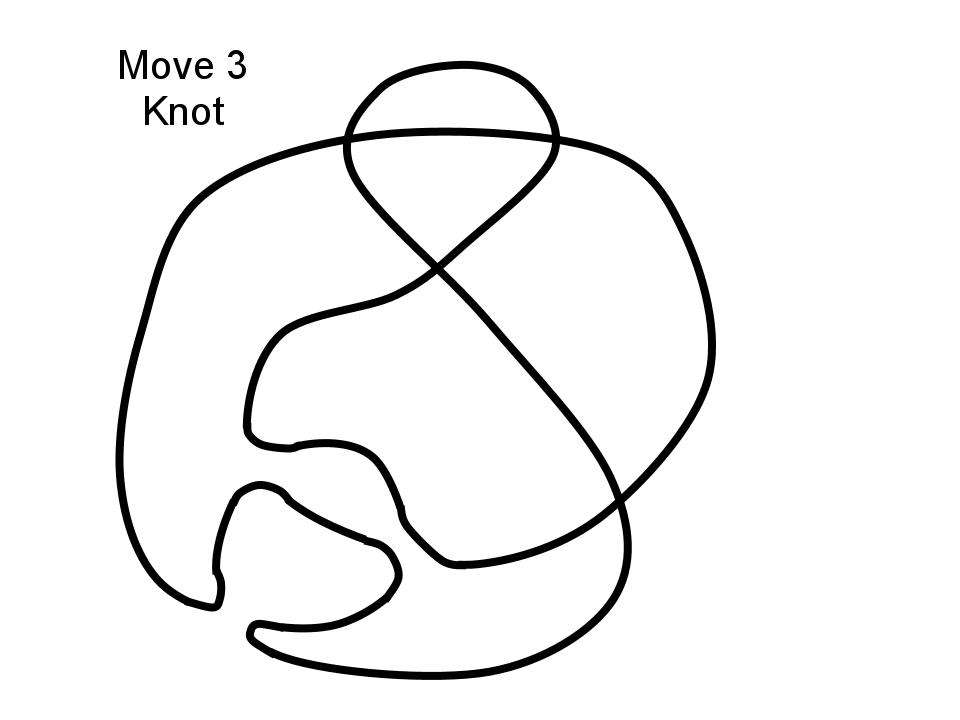}& &\includegraphics[scale=.15]{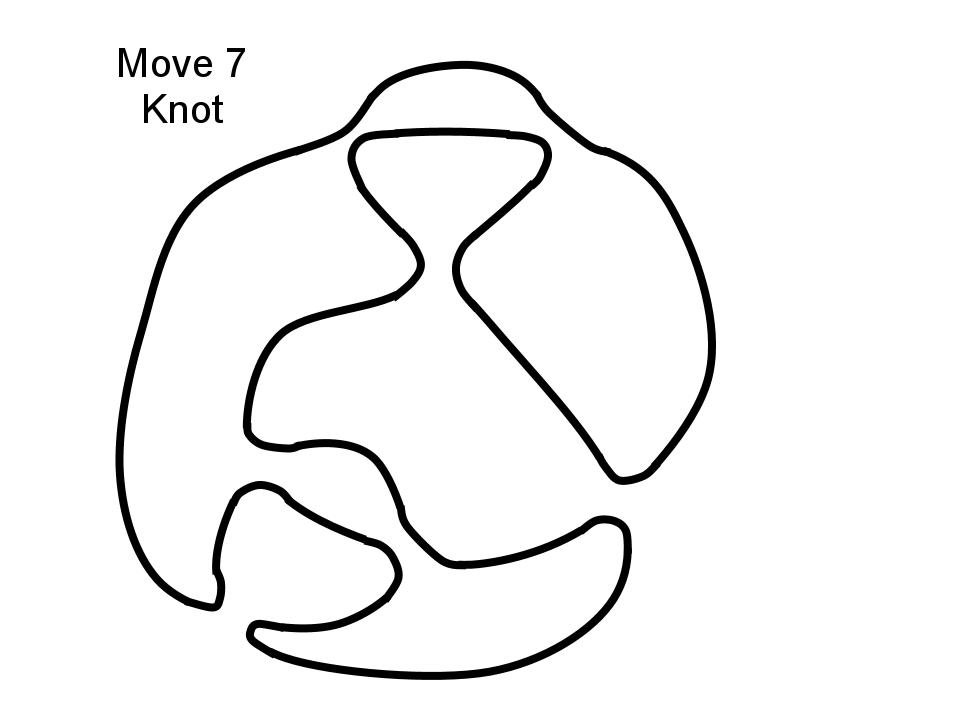}\\
\end{tabular}
\caption{{\bf Smoothing game example play on a 7 crossing link shadow. Knot plays first and Knot wins.}}
\label{gameplay}\end{center}
\end{figure}

For the purposes of this paper, our  goal is to classify link shadows by the outcomes of the Link Smoothing Game. In Section~\ref{smoothing}, we begin by making some immediate observations regarding link shadows on which $L$ has a winning strategy. We then illustrate how we can reinterpret our game as a game on planar graphs. In Section~\ref{classify}, we focus on a more complete classification of link shadows using their associated graphs. In Section~\ref{connect}, we examine the effect on the game of taking a connect sum of two shadows, and in Section~\ref{future}, we discuss what remains to be done in the classification of the Link Smoothing Game. 

\section{Graphs and the Link Smoothing Game.}\label{smoothing}

\subsection{Initial Observations.}
We begin our analysis of the Link Smoothing Game by making the following observation.

\begin{lemma}\label{L-adv} If L plays last in a game on link shadow $D$, then L has a winning strategy. Thus, 
\begin{enumerate}
\item If $D$ has an even number of crossings and L plays second, then L wins. 
\item If $D$ has an odd number of crossings and L plays first, then L wins.
\end{enumerate}
\end{lemma}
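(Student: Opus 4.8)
The plan is to prove the single key claim that \emph{whoever makes the final smoothing can force a disconnected diagram}, and then deduce both enumerated statements from a parity count of the moves. The two cases are immediate once this claim is established, since the total number of moves in a game on $D$ equals the number of crossings of $D$: each precrossing is smoothed exactly once. Hence if $D$ has an even number of crossings and $L$ moves second, then $L$ makes the even-numbered moves and in particular the last one; if $D$ has an odd number of crossings and $L$ moves first, then $L$ makes the odd-numbered moves and again the last one. In both cases $L$ plays last, so it suffices to treat the general hypothesis.

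Before analyzing the last move I would record a \emph{monotonicity} observation: smoothing a precrossing is a purely local operation at a single $4$-valent vertex $v$, replacing $v$ by two disjoint arcs. Every component not containing $v$ is untouched, and the one component containing $v$ is either preserved or split into two, so the number of connected components never decreases as the game proceeds. Thus once the diagram is disconnected it remains disconnected through the end. Consequently $L$ may play arbitrarily until the final move: if the diagram is already disconnected when $L$ is to move last, $L$ has won, and the real content is to show that from a connected diagram with a single remaining precrossing, $L$ can choose a smoothing that disconnects it.

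For that final move I would examine the local picture at the last precrossing $v$, labeling the four arc-ends leaving $v$ as $a,b,c,d$ in cyclic order. Because every other crossing has already been smoothed, the rest of the diagram is a union of pairwise non-crossing arcs and loops; since the diagram is connected and consists of these arcs together with $v$, there are no stray loops and the four ends are joined in pairs by external arcs. The crucial point is that this external matching is \emph{non-crossing} with respect to the cyclic order $a,b,c,d$, so it must be either the pair $\{a,b\},\{c,d\}$ or the pair $\{a,d\},\{b,c\}$, the \emph{diagonal} pairing $\{a,c\},\{b,d\}$ being excluded by planarity. These two possibilities are exactly the two available smoothings of $v$, so $L$ may choose the smoothing whose internal arcs duplicate the external matching. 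Each external arc then closes up with its matching internal arc into a simple closed curve, producing two disjoint circles and hence a disconnected diagram, so $L$ wins.

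The step I expect to be the main obstacle, and the heart of the argument, is the claim that the external matching at the last vertex cannot be the diagonal one: were it allowed, both smoothings of $v$ would yield a single circle and $L$ could not disconnect the diagram. Making this rigorous requires a clean statement that in a planar diagram with one remaining crossing the surrounding smoothed arcs realize a non-crossing matching of the four local ends, which I would justify by a Jordan-curve argument, namely that an external arc from $a$ to $c$ would separate $b$ from $d$ and thereby force the $b$-$d$ arc to cross it, contradicting the absence of crossings among smoothed strands.
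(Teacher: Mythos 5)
Your proof is correct and takes essentially the same approach as the paper: reduce by parity to the situation where $L$ makes the final smoothing, then show that the last mover can always force (or preserve) disconnection. The paper's entire proof is a one-sentence assertion of exactly this fact, so your monotonicity observation and the Jordan-curve argument excluding the diagonal pairing simply supply the justification the paper leaves implicit.
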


\begin{proof}
Any diagram with one remaining pre-crossing can be resolved so that the number of components either increases from one to two or remains at two or greater. \end{proof}

As a result of the previous proposition, it is clear that Link has a significant advantage in the game.  Indeed, Link can win before all crossings have been smoothed, while Knot can only win after the last crossing is smoothed.  In light of this, we investigate the following questions.  Are there link shadows on which Knot can guarantee a win? If so, how can these shadows be identified and what strategy can Knot use to win?  

Rephrasing these questions in the language of combinatorial game theory, we would like a classification of link shadows into outcome classes.  A link shadow $D$ is in one of four outcome classes, $\mathcal{N}$-position, provided that the {\it next} player to move (or first player) can guarantee a win; $\mathcal{P}$-position, provided that the previous player (or second player) can guarantee a win; $L$-position, provided that $L$ can guarantee a win regardless of moving first or second; and $K$-position, provided that $K$ can guarantee a win regardless of moving first or second.  Lemma~\ref{L-adv} implies there are no $K$-position link shadows.  Thus, as $K$ wins seem `rare,' we wish to identify all $\mathcal{N}$ and $\mathcal{P}$-position shadows. We shall assume $K$ moves last and describe the winning strategy for $K$.

We begin by looking at properties of shadows that are readily seen to be $L$-position shadows.
\begin{proposition}\label{shadow-Lwin}
Suppose L plays first on link shadow diagram $D$. Then L has a winning strategy if $D$ contains any of the following.
\begin{enumerate}
\item a nugatory crossing, i.e. a crossing that when appropriately smoothed disconnects the shadow,

\item a triplet of crossings as in Figure~\ref{triplecross}.
\end{enumerate} 
\end{proposition}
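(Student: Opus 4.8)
The plan rests on one structural observation that I would establish first: smoothing is a purely local surgery at a single $4$-valent vertex, so it can never merge two pieces of the diagram. Consequently the number of connected components of the (partially smoothed) shadow is nondecreasing as the game proceeds, and once the shadow is disconnected it stays disconnected for the remainder of the game. In particular, to exhibit a winning strategy for $L$ it suffices to show that $L$ can force the shadow to be disconnected at \emph{some} point; the final diagram will then have at least two components and $L$ wins. I would record this monotonicity as a short lemma and invoke it in both parts.

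For part (1) the argument is immediate from the definition. If $D$ contains a nugatory crossing, then by definition one of its two smoothings disconnects the shadow. Since $L$ moves first, $L$ simply applies that smoothing on the opening move. The shadow is now disconnected, and by monotonicity it remains disconnected no matter how the remaining precrossings are resolved, so $L$ wins.

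For part (2) the idea is to convert the triplet of Figure~\ref{triplecross} into a \emph{double} nugatory threat with a single preparatory move. I would have $L$ open by applying, to one distinguished crossing of the triplet, the smoothing that joins the two strands passing through it (a ``turnback''). The effect of this move is to cap off the two bigon regions of the triplet, thereby converting each of the remaining two crossings into a nugatory crossing: each now bounds a closed monogon, so smoothing it appropriately pinches that monogon off as a free circle and disconnects the shadow. Crucially the two monogons are disjoint, so the two resulting nugatory crossings are independent, and resolving one (in either way) does not destroy the threat at the other. Now $K$ has only one reply before $L$ moves again, so whatever $K$ does---neutralize one of the two crossings by smoothing it the connecting way, or play elsewhere entirely---at most one of the two threats is removed. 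On $L$'s second move, $L$ smooths the surviving nugatory crossing so as to disconnect the shadow, and monotonicity again finishes the proof.

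The step I expect to require the most care is verifying the two facts that make the fork work: first, that $L$'s opening turnback really does create two genuine closed monogons (rather than disconnecting prematurely in an irrelevant way or failing to isolate a loop), which is a local check on the configuration of Figure~\ref{triplecross}; and second, that the two threats are genuinely independent and that \emph{no} single move of $K$---in particular none outside the triplet---can defuse both at once. The latter follows because a monogon, once closed, carries only its own crossing on its boundary, so its disconnecting smoothing yields a free circle regardless of how the rest of the diagram is later smoothed, and hence can be undone only by smoothing that very crossing. I would also note that this strategy is precisely the one-level-up analogue of part (1): a nugatory crossing is a one-move win, and the triplet is a one-preparatory-move reduction to two simultaneous nugatory crossings, which is exactly what lets $L$ overcome the parity disadvantage of moving first (compare Lemma~\ref{L-adv}).
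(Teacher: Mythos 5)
Your proposal is correct and takes essentially the same approach as the paper: part (1) is the immediate one-move win, and your ``turnback'' smoothing of the distinguished (middle) crossing of the triplet is precisely the paper's vertical smoothing of the middle crossing, creating two independent nugatory crossings of which $K$ can neutralize at most one before $L$'s second move. The paper's proof is only two sentences long, leaving implicit both the monotonicity of disconnection and the independence of the two threats, which you verify explicitly.
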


\begin{figure}[htbp] \begin{center}
\begin{center} \includegraphics[scale=.15]{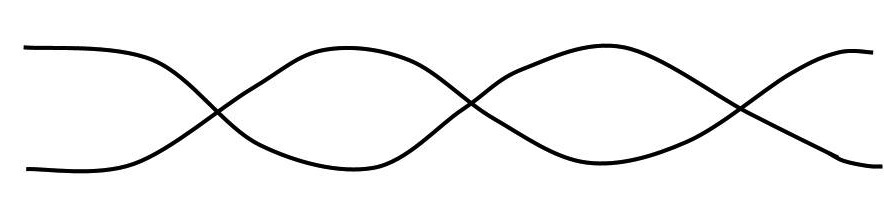}\end{center}
\caption{{\bf Triplet of crossings that guarantee a Link win.}}
\label{triplecross}\end{center}
\end{figure}

\begin{proof}
If a nugatory crossing is present, Link can win on her first move.  If Link vertically smooths the middle crossing of the triple, then she can win on her second move.
\end{proof}

\begin{figure}[htbp] \begin{center}
\begin{center} \includegraphics[scale=.15]{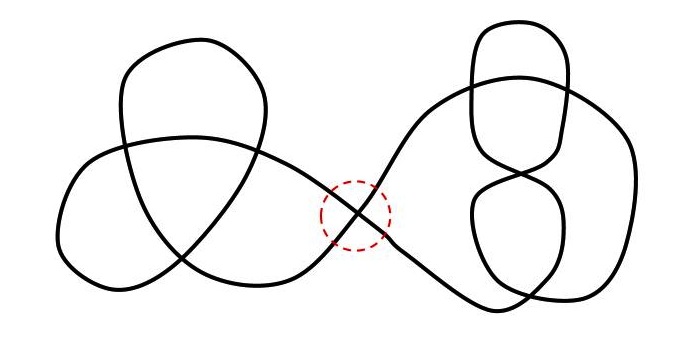}\end{center}
\caption{{\bf Example of a shadow with a nugatory crossing.}}
\end{center}
\end{figure}
\subsection{Links and Graphs.}

It is well known~\cite{Colin} that the shadow of a knot or link can be represented using a planar graph. This representation gives us tremendous insight into the game strategy.  

The graph, $G$, of the diagram $D$ is constructed by checkerboard coloring the regions of $D$, as in Figure~\ref{checker}. 

\begin{figure}[htbp] \begin{center}
\begin{center} \includegraphics[scale=.2]{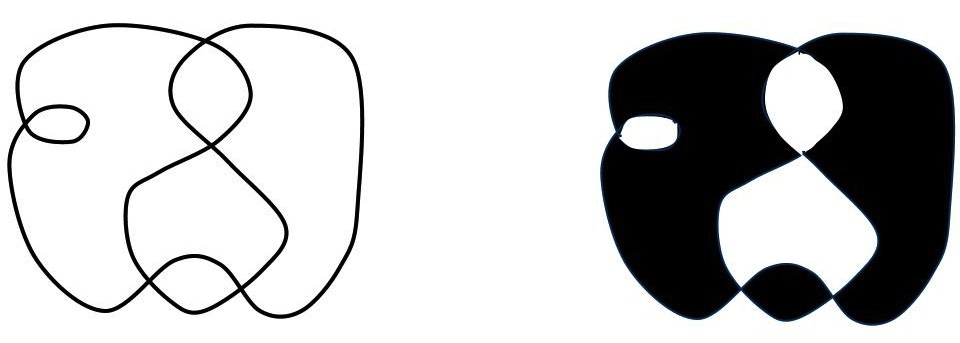}\end{center}
\caption{{\bf Example of a shadow and its checkerboard coloring.}}
\label{checker}\end{center}
\end{figure}

Vertices in the graph are in one-to-one correspondence with the black regions of the checkerboard coloring and there is an edge between vertices for each precrossing between the corresponding black regions.  The graph of a shadow may contain loops or multiple edges between a given vertex pair.  We call the graph of the shadow the black graph and its dual we call the white graph.

\begin{figure}[htbp]
\begin{center}
\includegraphics[scale=.2]{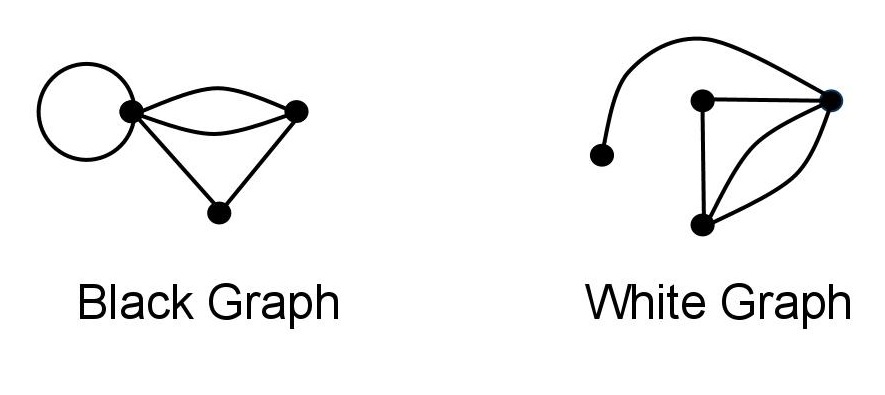}
\caption{{\bf The black and white graphs of the Figure~\ref{checker} shadow.}}
\label{Shadow2Graph}
\end{center}
\end{figure}

The Link Smoothing Game can be played on the black and white graphs rather than on the link shadow. We simply observe that a smoothing within a shadow corresponds either to an edge contraction or an edge deletion in the in the corresponding graph.  More precisely, an edge deletion in the black graph corresponds to a smoothing that eliminates an adjacency between black region(s) of the checkerboard coloring.  This smoothing in turn corresponds to joining white region(s) in the checkerboard coloring and thus contracting the corresponding edge in the white graph that is dual to the removed edge in the black graph.  Similarly, a smoothing that corresponds to an edge contraction in the black graph gives rise to the deletion of the dual edge in the white graph. We observe that the shadow is connected if and only if both the black and white graphs are connected.  

We note that while the graphs corresponding to link shadows often contain multi-edges, if the original link shadow is reduced (i.e. contains no loop that could be removed by a Reidemeister 1 move), then the corresponding black graph contains no loops.

In general, the strategy of the Link Smoothing Game is more readily seen in the graph representation of the shadow.  The following proposition is a reformulation of Proposition~\ref{shadow-Lwin} in terms of the graph of a link shadow.

\begin{proposition}\label{graph-Lwin}
Suppose $L$ plays first in a game on link shadow diagram $D$ corresponding to an embedding of connected planar graph $G$. Then $L$ has a winning strategy if $G$ contains any of the following.
\begin{enumerate}
\item a cut edge, i.e. an edge $e$ such that $G-e$ is disconnected,
\item a loop,
\item a pair of vertices joined by more than two edges,
\item adjacent degree two vertices that don't form a 2-cycle.
\end{enumerate} 
\end{proposition}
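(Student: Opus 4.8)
The plan is to exploit the dictionary established just above the statement: a single move is a choice, made by the mover, either to delete or to contract a chosen edge of the black graph $G$ (equivalently, by duality, to contract or to delete the corresponding edge of the white graph $G^*$), and $L$ wins as soon as either $G$ or $G^*$ becomes disconnected. I would organize the four conditions into two dual pairs, using the standard planar-duality fact that an edge is a loop of $G$ if and only if its dual is a cut edge of $G^*$, together with the observation that the whole game is symmetric under black--white duality, so any winning strategy proved for $G$ transfers verbatim to $G^*$. Conditions (1) and (2) form the first pair and give a win on $L$'s opening move; conditions (3) and (4) form the second pair and give a win on $L$'s second move. (In the background, this is exactly Proposition~\ref{shadow-Lwin} read through the checkerboard dictionary, but I will give the strategies directly on the graph.)

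For (1), $L$ simply deletes the cut edge $e$ on her first move, disconnecting $G$ and hence $D$. For (2), the dual of the loop is a cut edge of $G^*$, so the smoothing that contracts the loop in $G$ deletes that cut edge in $G^*$ and disconnects the white graph; thus (2) is precisely (1) read in the white graph. These two cases are the graph-theoretic incarnation of a nugatory crossing.

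For (3), suppose parallel edges $e_1,\dots,e_k$ with $k\ge 3$ join $u$ and $v$, listed in their planar order. I would have $L$ contract a middle edge $e_i$, one having a parallel neighbor on each side, which exists because $k\ge 3$. Contracting $e_i$ merges $u$ and $v$ and converts its two neighbors $e_{i-1}$ and $e_{i+1}$ into loops at the merged vertex. The crucial point is that $K$'s reply is a single-edge operation and can therefore eliminate at most one of these two loops; whatever $K$ does, a loop survives, and $L$ wins on her second move exactly as in case (2), by deleting the dual cut edge of the surviving loop. (If $K$'s reply instead disconnects a graph, then $L$ has already won.)

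Condition (4) I would reduce to (3) by passing to the white graph. If $u,v$ are adjacent degree-two vertices joined by a single edge $f$, with other incident edges $g$ at $u$ and $h$ at $v$, then because each of $u,v$ has degree two the two faces meeting $f$ are the very same two faces meeting $g$ at $u$ and meeting $h$ at $v$; dualizing, $f^{*},g^{*},h^{*}$ become three edges joining the same pair of white vertices, which is exactly configuration (3) in $G^{*}$, so the case-(3) strategy applied to the white graph finishes the argument. I expect the main obstacle to be this duality bookkeeping in (4): one must check that $f,g,h$ are genuinely distinct (here the hypothesis that $u,v$ do not form a $2$-cycle is essential, since it guarantees that $g$ and $h$ exist) and that the two faces flanking $f$ are distinct, the latter failing exactly when $f$ is a bridge of $G$ --- but in that degenerate case (1) already hands $L$ an immediate win.
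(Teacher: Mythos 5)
Your proof is correct and follows essentially the same route as the paper's: delete the cut edge in (1), contract the loop in (2) (equivalently, delete the dual cut edge in $G^{\ast}$), contract one of the parallel edges in (3) to create at least two loops of which $K$ can remove only one, and reduce (4) to (3) by passing to the dual graph. The additional bookkeeping you supply for case (4) --- verifying via the no-2-cycle hypothesis that the edges $g$ and $h$ exist and are distinct, and disposing of the degenerate bridge case through case (1) --- is detail the paper leaves implicit, but the underlying argument is identical.
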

\begin{proof}
We first note that $L$ wins if a move is made such that either $G$ or its dual, $G^{\ast}$, becomes disconnected. Clearly, deleting a cut edge as in (1) disconnects the graph, yielding an $L$ win. In (2), we note that a loop in $G$ corresponds to a leaf in $G^{\ast}$, which is a particular type of cut edge. Thus, $L$ can disconnect the graph on her first move. If a pair of vertices is joined by more than two edges as in (3), then $L$ can contract one of the edges to produce two or more loops. At least one loop will remain in $L$'s second turn, so $L$ has a winning strategy as in (2). Finally, if $G$ has adjacent degree two vertices that don't form a 2-cycle, then $G^{\ast}$ is a game of type (3). Hence, $L$ wins.
\end{proof}

We will see that defensive strategies for Knot can be found for shadows whose graphs contain certain pairings of edges. We will use these strategies to identify many $\mathcal{P}$- and $\mathcal{N}$-position shadows and to exhibit the strategy Knot can follow to win.

\section{Classification of Games}\label{classify}

In this section, we prove that much of the link shadow classification into outcome classes is determined by the number of edges and vertices in the graph representation of the shadow, as indicated in Figure~\ref{GameGrid}.  First we show that link diagrams with isomorphic graphs are in the same outcome class.  This is a surprising fact as the link diagrams for isomorphic graphs can look quite different, as is seen in Figure~\ref{iso}.

\subsection{Isomorphic Graphs.}

\begin{lemma}\label{Lwin} Suppose a game is played on link shadow diagram $D$ corresponding to connected planar graph $G$. L's final move results in an L win iff L is allowed to play on a cut edge or a loop. 
\end{lemma}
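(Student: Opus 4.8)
The plan is to prove the equivalence entirely on the graph side, translating L's move into operations on the black graph $G$ and the white graph $G^{\ast}$ and repeatedly invoking two facts from the previous subsection: that the shadow is connected if and only if both $G$ and $G^{\ast}$ are connected, and that a single smoothing is either a deletion in $G$ (equivalently a contraction in $G^{\ast}$) or a contraction in $G$ (equivalently a deletion in $G^{\ast}$). I will also use the two standard graph-theoretic facts that edge contraction never disconnects a connected graph, and that under planar duality loops and cut edges are interchanged, so that a loop of $G$ is dual to a cut edge (bridge) of $G^{\ast}$. Since the statement is an \emph{iff}, I would handle the two directions separately, and I read ``L's final move results in an L win'' as ``the shadow is disconnected immediately after L's move.''

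For the forward (``if'') direction, suppose L is allowed to play on an edge $e$ that is a cut edge or a loop. If $e$ is a cut edge of $G$, L deletes it; then $G-e$ is disconnected, so the shadow is disconnected and L wins. If $e$ is a loop of $G$, then $e$ is dual to a bridge $e^{\ast}$ of $G^{\ast}$, and L chooses the smoothing that contracts $e$ in $G$, which deletes $e^{\ast}$ in $G^{\ast}$ and disconnects $G^{\ast}$, again producing an L win. The only care needed here is to pick the correct one of the two smoothings at the loop, since the opposite smoothing (deleting $e$, i.e.\ contracting $e^{\ast}$) would leave both graphs connected.

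For the converse I would argue by contrapositive: assume the edge $e$ on which L plays is neither a cut edge nor a loop, and show that neither legal move disconnects the shadow. Deleting $e$ leaves $G$ connected, because $e$ is not a cut edge, and corresponds to a contraction in $G^{\ast}$, which preserves connectivity; contracting $e$ leaves $G$ connected, because contraction never disconnects, and corresponds to deleting the dual edge $e^{\ast}$ in $G^{\ast}$, which keeps $G^{\ast}$ connected precisely because $e^{\ast}$ is not a bridge — and this is exactly where the hypothesis that $e$ is not a loop is used. In all cases both $G$ and $G^{\ast}$ remain connected, so the shadow stays connected and the move is not an L win. The main obstacle is simply keeping the dual bookkeeping straight, since the ``only if'' direction rests entirely on ruling out disconnection of $G^{\ast}$; I would therefore state the loop-versus-bridge duality cleanly before the computation. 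Finally, I would note, for consistency with Lemma~\ref{L-adv}, that the single remaining edge of a connected graph is always a loop or a cut edge, so when L plays last the configuration is always one of the two winning types identified here.
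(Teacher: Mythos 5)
Your proof is correct, but it takes a genuinely different route from the paper's. The paper disposes of the lemma in two sentences on the shadow side: by the definition of a nugatory crossing (a crossing that, when appropriately smoothed, disconnects the shadow), a connected diagram can only be disconnected by smoothing at such a crossing, and a nugatory crossing corresponds to a loop in one checkerboard graph and a cut edge in the dual. You instead argue entirely on the graph side, combining the deletion/contraction dictionary between $G$ and $G^{\ast}$, the criterion that the shadow is connected iff both $G$ and $G^{\ast}$ are connected, the fact that contraction never disconnects a connected graph, and the loop--bridge exchange under planar duality. What your route buys is rigor in the ``only if'' direction, which the paper compresses into the phrase ``by definition'': your contrapositive argument checks explicitly that if $e$ is neither a cut edge nor a loop, then each of the two possible smoothings leaves both $G$ and $G^{\ast}$ connected, with the non-loop hypothesis doing exactly the work of keeping $G^{\ast}$ connected after the dual deletion. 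You also surface a detail the paper hides inside the word ``appropriately'': at a loop or cut edge only one of the two smoothings disconnects, so L must select the correct resolution. The cost is length and the need to keep the dual bookkeeping straight; the paper's terseness, by contrast, leans on topological intuition about nugatory crossings rather than on a step-by-step graph verification. Your closing observation, that the single remaining edge of a connected graph must be a loop or a bridge, is a sound consistency check against Lemma~\ref{L-adv}.
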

\begin{proof}
By definition, a connected link diagram may only be disconnected by a smoothing at a crossing if a nugatory crossing is present. A nugatory crossing corresponds to a loop in one graph associated to the link diagram and a cut edge in the dual graph.
\end{proof}

\begin{figure}[htbp]
\begin{center}\includegraphics[scale=.22]{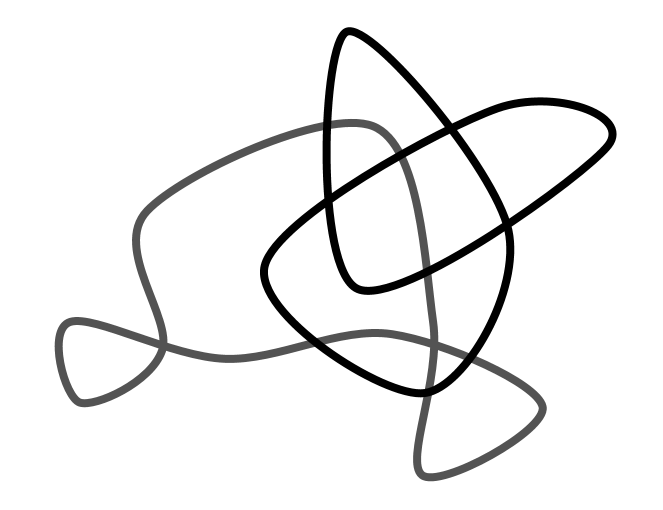}\hspace{.3in}\includegraphics[scale=.2]{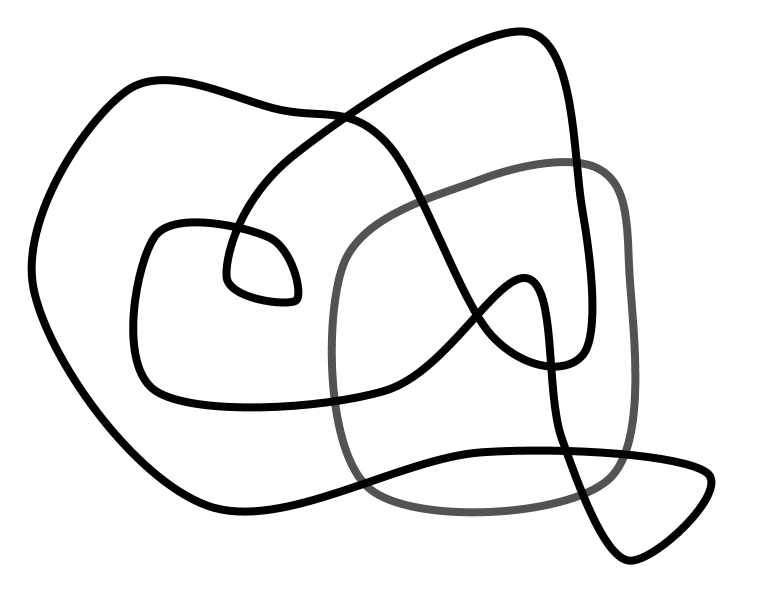}\end{center}
\caption{{\bf Link shadows with isomorphic black graphs.}}\label{iso}
\end{figure}

\begin{theorem}
Any two embeddings of a planar graph $G$ in the plane correspond to link shadow diagrams with equivalent game outcomes.
\end{theorem}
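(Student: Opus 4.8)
The plan is to show that the Link Smoothing Game played on the shadow arising from an embedding of $G$ is, as a combinatorial game, completely determined by the abstract graph $G$ and does not depend on the chosen embedding. Since the outcome class of a combinatorial game depends only on its game tree, once I establish that all embeddings of $G$ yield the \emph{same} game, equivalence of outcomes is immediate.

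First I would recall the dictionary set up in Section~\ref{smoothing}. For any planar embedding of $G$, the black graph of the resulting shadow is $G$ itself, while the white graph is the planar dual $G^{\ast}$; the dual is the only piece of data that can change with the embedding, and the loop and multi-edge structure of $G$ is intrinsic and hence common to all embeddings. Each smoothing of a precrossing is either the deletion or the contraction of the corresponding edge of $G$ (and, dually, the contraction or deletion of its mate in $G^{\ast}$). Thus the positions reachable in play are exactly the graphs obtained from $G$ by sequences of edge deletions and contractions, and from any position the legal moves are the deletion or contraction of each surviving edge — data that refer only to the abstract graph.

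Next I would pin down the win condition in purely combinatorial terms. Using that the shadow is connected if and only if both the black and the white graphs are connected, together with the fact that deletion and contraction are interchanged under planar duality (so that the current black and white graphs remain duals throughout play), I would argue that the shadow becomes disconnected on a move precisely when that move deletes a cut edge or contracts a loop of the current black graph: deleting a cut edge disconnects the black graph, while contracting a loop deletes a cut edge of the dual and hence disconnects the white graph, and no other move can disconnect either graph. This is exactly the content of Lemma~\ref{Lwin}, and crucially the notions of cut edge and loop are properties of $G$ and its minors, independent of any embedding.

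Combining these two steps, the positions, the legal moves, and the condition under which $L$ wins are all expressible entirely in terms of $G$, so for any two embeddings the two games have isomorphic game trees and therefore identical outcome classes. The main point requiring care — and the only place the planar structure enters — is the claim in the third step that the white graph can be disconnected only by contracting a loop of the black graph; this rests on the duality identities $(G\setminus e)^{\ast}=G^{\ast}/e^{\ast}$ and $(G/e)^{\ast}=G^{\ast}\setminus e^{\ast}$, which guarantee the dual relationship is preserved at every stage, and on the correspondence between loops of $G$ and cut edges of $G^{\ast}$. Once this is in hand, embedding-independence of the outcome follows at once.
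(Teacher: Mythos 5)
Your proposal is correct and follows essentially the same route as the paper: both arguments reduce the win condition to Lemma~\ref{Lwin} (disconnection occurs exactly when a cut edge is deleted or a loop is contracted), observe that moves and these combinatorial features depend only on the abstract graph and its deletion/contraction minors, and conclude that all embeddings yield isomorphic games. Your write-up is somewhat more careful than the paper's — in particular, your explicit appeal to the duality identities $(G\setminus e)^{\ast}=G^{\ast}/e^{\ast}$ and $(G/e)^{\ast}=G^{\ast}\setminus e^{\ast}$ to justify that the white graph can only be disconnected via a loop of the black graph fills in a step the paper leaves implicit.
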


\begin{proof}
Suppose a link shadow $D$ has game outcome $o(D)$ under perfect play in the smoothing game. The strategy that produces $o(D)$ corresponds to a strategy (of edge deletion/contraction) on planar embeddings of the black and white graphs $G$ and $G^{\ast}$. By Lemma~\ref{Lwin}, $L$ wins if and only if at some point in the game, a cut edge or loop arises. Observe that if an isomorphic game is played on a different embedding of $G$ or $G^{\ast}$, a cut edge or loop will arise at the same point in this isomorphic game (or perhaps not at all), allowing $L$ a win (or $K$ a win, if such an edge is never produced). Thus, the particular embedding of the graph associated to the diagram $D$ is irrelevant to the winning strategy.
\end{proof}

\subsection{Classification.}

We have begun to see that $L$-position games are abundant. Figure~\ref{GameGrid} illustrates just how abundant they are. On the other hand, this figure also indicates that there are infinite families of graphs that are not $L$-position. In this section, we prove the major classification results that are summarized in Figure~\ref{GameGrid}.

\begin{figure}[htbp]
\begin{center}\includegraphics[scale=.4]{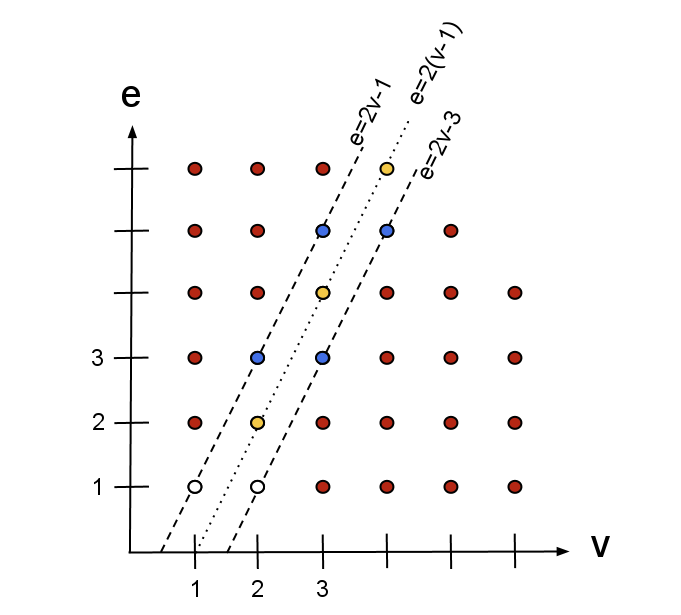}\end{center}
\caption{{\bf The game class of a link shadow with connected graph $G$ having $v$ vertices and $e$ edges is determined by the  color of the lattice point $(v, e)$.  Red indicates an $L$-position graph.  Blue, along the lines $e=2v-1$ and $e=2v-3$,  indicates the existence of both $\mathcal{N}$-position and $L$-position graphs but no $\mathcal{P}$-position graphs.  Yellow, along the line $e=2(v-1)$, indicates the existence of both $\mathcal {P}$-position and $L$-position graphs  but no $\mathcal{N}$-position graphs.  The pairs $(2, 1)$ and $(1, 1)$ are $\mathcal{N}$-position.}}\label{GameGrid}
\end{figure}

\begin{theorem}\label{count} Let $G$ be a connected, planar graph associated to a link shadow $D$.  Let $v$ and $e$ denote the number of vertices and edges, respectively, in $G$.  If $e$ is even and $2(v-1)\neq e$, then $D$ is an $L$-position diagram.
\end{theorem}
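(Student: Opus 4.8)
The plan is to dispose of the easy half immediately and then build a single counting strategy for the hard half. If $L$ moves second then, since $e$ is even, $L$ makes the last smoothing, so $L$ wins by part~(1) of Lemma~\ref{L-adv}. Hence it suffices to exhibit a winning strategy for $L$ when $L$ moves \emph{first}. The quantity I would track is the cycle rank $\beta(H)=e(H)-v(H)+1$ of a connected graph $H$ (the number of independent cycles). For the black graph $\beta(G)=e-v+1$, while for the white graph $\beta(G^{\ast})=e-(e-v+2)+1=v-1$, so $\beta(G)+\beta(G^{\ast})=e$ at the start.

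First I would record the effect of the two legal moves on $\beta(G)$. Deleting a black edge lying on a cycle (in particular a loop) lowers $\beta(G)$ by exactly $1$ and keeps $G$ connected, while contracting a non-loop edge leaves $\beta(G)$ unchanged; crucially, no non-losing move ever raises $\beta(G)$. Deleting a cut edge or contracting a loop disconnects $G$ or $G^{\ast}$, which is an immediate $L$ win by Lemma~\ref{Lwin}. Note also that $\beta(G)=0$ exactly when the connected graph $G$ is a tree, in which case every remaining edge is a cut edge.

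Suppose first that $e<2(v-1)$, i.e. $\beta(G)=e-v+1<e/2$. I would have $L$ play greedily: on each turn, contract a loop if one is present (winning); else delete an edge on a cycle if $G$ has one (lowering $\beta(G)$ by $1$); else $G$ is a tree and $L$ deletes a cut edge (winning). This move is always available, since before any of $L$'s turns the board still carries an edge (the game can only end on move $e$, which belongs to $K$ because $e$ is even and $L$ moved first). Now assume for contradiction that $K$ survives to a win; a $K$ win is precisely the single-circle end position, whose black graph is one vertex with no edges, so $\beta(G)=0$ there. Since $\beta(G)$ begins at $e-v+1$, never increases, and ends at $0$, the total number of cycle-lowering deletions played is exactly $e-v+1$. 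But in this line every one of $L$'s $e/2$ moves is such a deletion, giving $e/2\le e-v+1<e/2$, a contradiction. As the game is finite and ends only in a disconnection or the single-circle position, $L$ must therefore win.

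The remaining case $e>2(v-1)$ is the mirror image, since it is equivalent to $\beta(G^{\ast})=v-1<e/2$ and the smoothing game is symmetric under interchanging the black and white graphs (a deletion in one is a contraction in the other, and an $L$ win is the disconnection of either); running the previous paragraph on $G^{\ast}$ finishes it. The part I expect to require the most care is the bookkeeping in the third paragraph: I must verify that $L$'s greedy move is genuinely always legal and never self-defeating, that a $K$ win forces the exact count $e-v+1$ of cycle-lowering deletions, and that the strict gap $\beta(G)\ne e/2$---which is exactly the hypothesis that $e$ is even and $e\ne 2(v-1)$---is what drives the contradiction. The duality reduction in the last step also deserves an explicit sentence, as it is what lets one argument cover both off-diagonal cases.
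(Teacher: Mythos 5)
Your proof is correct, and it reaches the theorem by a genuinely different mechanism than the paper's, though the two share the same skeleton: both dispose of the case where $L$ moves second via Lemma~\ref{L-adv}, and both use black/white duality (via Euler's formula, equivalently $\beta(G)+\beta(G^{\ast})=e$) so that only one of the two inequalities needs a direct argument. The paper attacks $e>2(v-1)$ head-on by induction on $v$: $L$ contracts an arbitrary edge, $K$ replies, and the inequality together with the evenness of $e$ is an invariant of each $L$--$K$ round, with the bouquet of loops as base case; the case $e<2(v-1)$ is then dualized. You attack $e<2(v-1)$ head-on with a monovariant and a global count: in any line of play in which the diagram stays connected, the cycle rank never increases, drops by exactly one per deletion, and must reach $0$ for $K$ to win, so your greedy strategy---which makes all $e/2$ of $L$'s moves cycle-edge deletions---forces $e/2\le e-v+1<e/2$, a contradiction; the case $e>2(v-1)$ is then dualized. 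These are dual views of the same tension (the paper's invariant $e>2(v-1)$ is precisely $\beta(G)>e/2$, i.e., $\beta(G^{\ast})<e/2$), but the packaging differs: your counting argument makes the reason $L$ wins more transparent ($L$ can destroy independent cycles faster than the supply permits) and sidesteps the paper's slightly informal inductive step (``Presumably, $K$'s move will not disconnect the diagram''---in your accounting a disconnecting move by $K$ is simply an immediate $L$ win), at the cost of the bookkeeping you correctly flagged: checking that the greedy move is always legal because $L$ never faces an empty board, and that a $K$ win pins the number of deletions at exactly $e-v+1$. Both of those checks go through, so the argument stands.
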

\begin{proof}  Since $e$ is even, we may assume $L$ moves first on $D$, else $L$ has the last move and wins. 
Suppose it is the case that $e> 2(v-1)$.  We prove $L$ has a winning strategy on $D$ by induction on $v$.  The assumptions above applied to the base case $v=1$ gives rise to a graph $G$ that is a bouquet of $e \geq 2$ loops.  In $L$'s first move, she can contract a loop, thus disconnecting the dual graph and winning.  

Let $G$ be a connected planar graph with $v>1$ vertices and an even number of edges, $e$, such that $e>2(v-1)$.   Suppose $L$ has a winning strategy on all such graphs with fewer than $v$ vertices.  In $L$'s first move she can contract any edge of $G$, thus resulting in a connected planar graph with $v-1$ vertices and $e-1$ edges.  In $K$'s next move she can either contract or delete an edge resulting in $e-2$ edges and either $v-2$ or $v-1$ vertices respectively.  Presumably, $K$'s move will not disconnect the diagram, thus it suffices to prove $e-2 > 2(v-2-1)$ and $e-2> 2(v-1-1)$. The result, then, follows by induction since these two inequalities follow readily from the assumption that $e>2(v-1)$.

Next suppose $e<2(v-1)$.  We will prove that the dual of $G$, denoted $G^{\ast}$ with $v^{\ast}$ vertices and $e^{\ast}$ edges, satisfies $e^{\ast}>2(v^{\ast}-1)$.  Proving this inequality holds will imply $L$ has a winning strategy on the shadow associated to $G^{\ast}$ by the argument given above, and thus, by duality, $L$ has a winning strategy on $D$. (Recall that if one of $G$ or $G^{\ast}$ can be disconnected, $L$ wins.) The planar graph $G$, embedded on the sphere gives rise to a polygonal decomposition satisfying $v-e+f=2$, where $f$ is the number of polygon faces.  By definition of the dual graph $e^{\ast}=e$, $v^{\ast}=f=2+e-v$.  Thus $e<2(v-1)$ implies $e^{\ast}<2(2+e^{\ast}-v^{\ast}-1)$, which implies $e^{\ast}>2(v^{\ast}-1)$.
\end{proof}

From this theorem, we derive an immediate corollary for graphs with an odd number of edges.

\begin{corollary}\label{ve-cor} Let $G$ be a connected, planar graph associated to a link shadow $D$.  Let $v$ and $e$ denote the number of vertices and edges, respectively, in $G$.  If $e$ is odd and $e>2v-1$or $e<2(v-1)-1$, then $D$ is an $L$-position shadow.
\end{corollary}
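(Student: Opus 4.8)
The plan is to leverage Theorem~\ref{count} by reducing the odd-edge case to the already-settled even-edge case after a single move. Since $e$ is odd, Lemma~\ref{L-adv}(2) already guarantees that $L$ wins whenever $L$ moves first, because she then plays last. So to establish that $D$ is an $L$-position it remains only to show that $L$ also wins when she moves \emph{second}, that is, when $K$ is forced to open the game.

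Suppose then that $K$ moves first. Whatever precrossing $K$ smooths, the effect on $G$ is either an edge deletion or an edge contraction, producing a connected planar graph $G'$ with $e'=e-1$ (now \emph{even}) edges and with $v'=v$ vertices in the deletion case or $v'=v-1$ vertices in the contraction case. My intention is to verify that, in every case, $G'$ satisfies the hypotheses of Theorem~\ref{count}, so that $G'$ is itself an $L$-position; then $L$, who is the player to move on $G'$, inherits a winning strategy, which delivers the $L$-win on $D$.

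The remaining work is arithmetic, and this is exactly where the hypotheses $e>2v-1$ and $e<2(v-1)-1$ earn their keep. If $e>2v-1$ with $e$ odd, then in fact $e\geq 2v+1$, so $e'\geq 2v$; a one-line computation shows $e'>2(v'-1)$ both when $v'=v$ and when $v'=v-1$. Symmetrically, if $e<2(v-1)-1$ with $e$ odd, then $e\leq 2v-5$, so $e'\leq 2v-6$, and one checks $e'<2(v'-1)$ in both the deletion and contraction cases. In either situation the strict inequalities also certify the nondegeneracy condition $e'\neq 2(v'-1)$ required by Theorem~\ref{count}, so $G'$ is an $L$-position as claimed.

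The point to watch — though it turns out to be a help rather than an obstacle — is that $K$'s opening move might instead disconnect the shadow outright, by deleting a cut edge or, dually, contracting so as to leave a loop; by Lemma~\ref{L-adv} and the connectivity criterion this is an immediate $L$-win, so it only improves $L$'s position. The one genuine check is that both of $K$'s move types must be handled simultaneously, and the reason the argument closes cleanly is precisely the margin built into the hypotheses: $e$ lies at least two away from each odd boundary value $2v-1$ and $2v-3$, so after $K$ removes a single edge the resulting $G'$ still clears the even threshold of Theorem~\ref{count} no matter which type of move $K$ chose.
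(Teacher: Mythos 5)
Your proof is correct and takes essentially the same approach as the paper: $L$ wins moving first by the parity argument, and any opening move by $K$ (deletion or contraction) produces an even-edge graph still satisfying $e'\neq 2(v'-1)$, so Theorem~\ref{count} applies. The only difference is that you spell out the arithmetic and the disconnecting-move case that the paper's proof dismisses as ``easy to verify.''
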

\begin{proof} First, we note that if $L$ moves first, $L$ wins, so we assume that $K$ moves first. It is easy to verify that no move performed by $K$ will result in a diagram such that $e=2(v-1)$. Thus, after $K$'s move, we must be in the situation of Theorem~\ref{count}.
\end{proof}

We note that the condition $e=2(v-1)$ on the graph $G$ of a diagram $D$ is necessary for $K$ to have a defensive winning strategy, but it doesn't guarantee a win. This is illustrated in Figure~\ref{noKwin}. Each of these graphs satisfy the condition $e=2(v-1)$, however they are all $L$-position graphs.

\begin{figure}[htbp]
\begin{center}\includegraphics[scale=.3]{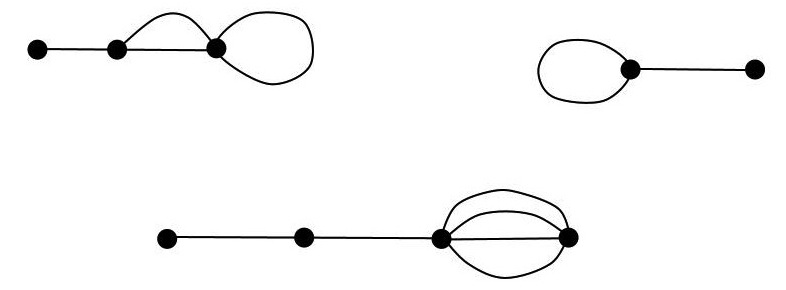}\end{center}
\caption{{\bf  $L$-position graphs satisfying $2(v-1)=e$.}}\label{noKwin}
\end{figure}

On the other hand, there are many examples where $K$ has a defensive winning strategy. Here, we give a characteristic that gives rise to many $\mathcal{P}$-position graphs. The sufficient condition we provide for a graph to be $\mathcal{P}$-position makes use of the notion of a spanning tree. We refer the reader unfamiliar with this concept to an introductory graph theory text such as~\cite{pearls}.

\begin{theorem}\label{spanning} Let $G$ be a connected, planar graph associated to a link shadow $D$. Suppose that $G$ is the union of two subgraphs, $T_1$ and $T_2$, where $T_1$ and $T_2$ are edge-disjoint spanning trees for $G$. Then $G$ is a $\mathcal{P}$-position game.
\end{theorem}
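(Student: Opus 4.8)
The plan is to recognize this as (one direction of) the Shannon switching game and to hand $K$ an explicit pairing strategy built from the two trees. First I would pin down the parity and the reformulation. Since $T_1$ and $T_2$ are edge-disjoint spanning trees, $e = |E(T_1)| + |E(T_2)| = 2(v-1)$, which is even; this is exactly the boundary line left uncovered by Theorem~\ref{count}. Because $e$ is even, I may assume $K$ is the second player (otherwise $L$ plays last and wins by Lemma~\ref{L-adv}), and the claim becomes precisely that this second player $K$ wins, i.e. $G$ is $\mathcal{P}$-position. Next I would record the key reformulation coming from Lemma~\ref{Lwin}: $L$ can only ever win by playing on a cut edge or a loop, so it suffices to give $K$ a strategy guaranteeing that whenever it is $L$'s turn the remaining graph has neither a cut edge nor a loop, and that the game ends with a connected shadow.

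I would have $K$ maintain the invariant that, at the start of each of $L$'s turns, the live graph (the result of deleting the edges smoothed as deletions and contracting those smoothed as contractions) is the edge-disjoint union of two spanning trees. This invariant immediately precludes an $L$ win: an edge lying in two edge-disjoint spanning trees is impossible for a cut edge (a cut edge lies in every spanning tree, hence in both) and for a loop (a loop lies in no spanning tree, yet every live edge lies in one of the two trees). So by Lemma~\ref{Lwin}, $L$ never has a winning move. I would then describe $K$'s response restoring the invariant. If $L$ deletes an edge $e$, say $e \in T_1$, this splits $T_1$ into components $A$ and $B$; since $T_2$ spans, some $f \in T_2$ crosses the cut, and $K$ contracts $f$, after which $T_1 - e$ (with $A, B$ merged by $f$) and $T_2/f$ are again two edge-disjoint spanning trees. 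If instead $L$ contracts an edge $e$, say $e \in T_1$, then $T_1/e$ remains a spanning tree while the image of $T_2$ (which did not contain $e$) acquires a unique cycle; $K$ deletes an edge $g$ of that cycle, leaving $T_1/e$ and $T_2 - g$ as the new edge-disjoint spanning trees. A short bookkeeping check shows the live graph always has $2(v'-1)$ edges on its current $v'$ vertices, so it is exactly the union of the two trees and every live edge lies in precisely one of them; in particular every deletion $K$ performs removes a cycle edge and hence cannot disconnect.

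Finally, since $K$ plays last, the game terminates after one of $K$'s moves with all precrossings smoothed and the invariant having held throughout $L$'s turns; connectivity of the shadow is never lost (contractions preserve it and $K$'s only deletions are of cycle edges), so the final shadow is connected and $K$ wins, establishing the $\mathcal{P}$-position. I expect the main obstacle to be the exchange step: verifying that the repairing edge $f$ (respectively the cycle edge $g$) always exists and that the updated pair is again two edge-disjoint spanning trees of the smaller live graph. This is precisely the content of Lehman's theorem for the Shannon switching game, and I would either cite it or carry out the standard exchange argument on the fundamental cut and fundamental cycle, taking care that $K$'s repair never creates a cut edge or loop and that the dual graph $G^{\ast}$ stays connected as well — the latter being automatic here, since for planar $G$ the complement of a spanning tree of $G$ is a spanning tree of $G^{\ast}$, so a single invariant on $G$ controls both graphs at once.
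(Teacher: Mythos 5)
Your proposal is correct and takes essentially the same approach as the paper: the paper's proof is an induction on the number of edges whose inductive step is exactly your exchange/repair strategy ($L$ deletes an edge of $T_1$ $\Rightarrow$ $K$ contracts a $T_2$-edge reconnecting the two pieces; $L$ contracts an edge of $T_1$ $\Rightarrow$ $K$ deletes an edge of the unique cycle created in $T_2$), together with the same parity observation $e=2(v-1)$ and the same duality remark that lets one work with the black graph alone. Your explicit appeal to Lemma~\ref{Lwin} (no cut edges or loops can exist in a union of two edge-disjoint spanning trees) and the framing via the Shannon switching game and Lehman's theorem make the argument more self-contained but do not change its substance.
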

\begin{proof} We note that each spanning tree in $G$ has $v$ vertices and $v-1$ edges, so $G=T_1\cup T_2$ with $e(T_1)\cap e(T_2)=\emptyset$ implies that $e=2(v-1)$. Furthermore, if a graph is composed of two edge-disjoint spanning trees, so is its dual. Thus, we can limit our attention to the black graph of a link shadow. Since our graph has an even number of edges, we assume that $L$ moves first. Our proof proceeds by induction. 

The basis case is clear, so let us suppose that $K$ has a winning strategy moving second on any graph with $2n$ edges that is the union of two edge-disjoint spanning trees. Let $G$ be a connected planar graph with $2(n+1)$ edges that is the union of two edge-disjoint spanning trees $T_1$ and $T_2$. Suppose, without loss of generality, that $L$ contracts edge $e_1$ in $T_1$. $T_1/e_1$ remains a connected spanning tree, but $T_2$ must now contain a cycle. Deleting any edge in $T_2$'s cycle returns $T_2$ to being a connected spanning tree. The result is a graph with $2n$ edges of the appropriate form to guarantee a $K$ win, by the inductive hypothesis. 

Suppose instead that $L$ deletes edge $e_1$ from $T_1$ on his turn. Then $T_1\backslash e_1$ is disconnected (and may simply be the disjoint union of a tree with a single vertex), while $T_2$ remains connected. Since $T_1$ is a spanning tree of $G$ the distance between the two components of $T_1\backslash e_1$ is 1. Thus as $T_2$ spans $G$, there must be an edge in $T_2$ that, when contracted, reunites the two components of $T_1$, returning it to being a spanning tree. This contraction in $T_2$ preserves its spanning tree property in the resulting graph, so the outcome of the deletion-contraction is a graph with $2n$ edges composed of two edge-disjoint spanning trees. Thus, $K$ has a winning strategy by induction.
\end{proof}

The $\mathcal{N}$-position games referred to in Figure~\ref{GameGrid} are discussed at the end of the following section of examples of $\mathcal{P}$-position graphs.

\subsection{Examples.}
There are many interesting examples of planar graphs containing two edge-disjoint spanning trees. One of the simplest families with this property is the family of wheel graphs, shown in Figure~\ref{wheel}.

\begin{figure}[htbp]
\begin{center}\includegraphics[scale=.3]{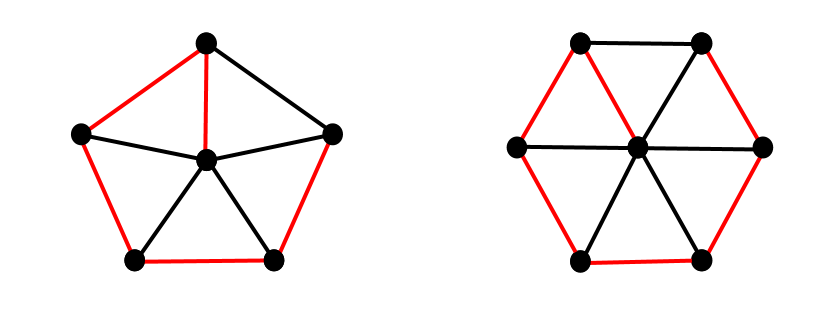}\end{center}
\caption{{\bf Wheel graphs and edge-disjoint spanning trees.}}\label{wheel}
\end{figure}

Another interesting family of $\mathcal{P}$-position graphs containing two edge-disjoint spanning trees are called \textbf{2-edge blowout graphs}. To introduce these examples, we first define an $n$-edge blowout.

\begin{definition} Let $G$ be a finite planar graph with a fixed planar embedding $D$ and  let $v$ be a vertex of $G$.  An \textbf{$n$-edge blowout of $G$ at $v$} is a planar graph obtained from $D$ by selecting a partition of the edges emanating from $v$ into two subsets of edges (as pictured in Figure~\ref{blow}), `splitting' the  vertex $v$ into two vertices $v'$ and $v''$ so that each of the two edge subsets are connected to $v'$ or $v''$, and then adding $n$ new edges to the diagram between $v'$ and $v''$ in any way that results in a planar graph.
\end{definition} 

\begin{figure}[htbp]
\begin{center}
\includegraphics[scale=.2]{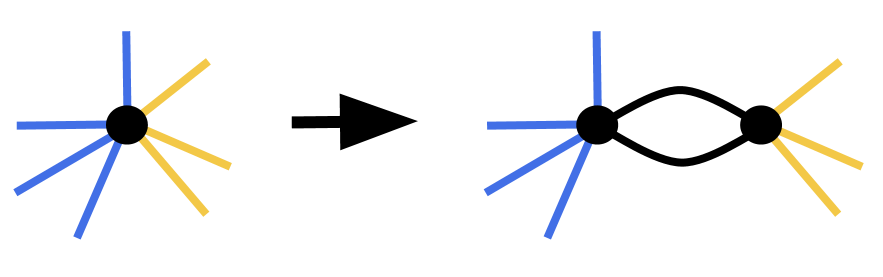}
\end{center}
\caption{{\bf The local picture of a 2-edge blowout at $v$. On the left, the partition of edges is denoted by the edge colors yellow and blue. On the right is the blowout corresponding to this partition. }}\label{blow}
\end{figure}

Perhaps surprisingly, the edge blowout operation can be used repeatedly to create any connected planar loopless graph.

\begin{proposition} Given a finite connected planar loopless graph $G$ with $k+1$ vertices, there exists a sequence of integers $n_1, n_2, \dots, n_k$ such that $G$ can be built from a single vertex via a sequence of edge blowouts of type $n_1, n_2, \dots, n_k$.  
\end{proposition}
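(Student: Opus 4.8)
The plan is to prove this by reverse induction on the number of vertices, realizing the $n$-edge blowout as the inverse of a vertex identification. Concretely, I would first introduce the inverse operation: given a connected planar loopless graph $G$ with at least two vertices together with an edge $e = uw$, let $n$ denote the number of edges joining $u$ and $w$, and form $G'$ by identifying $u$ and $w$ into a single vertex $v$ and then deleting all $n$ edges between them. The first task is to verify that $G'$ is again a connected, planar, loopless graph with exactly one fewer vertex. Connectivity is immediate since $u$ and $w$ were adjacent; looplessness holds because the only edges that could become loops under the identification are the $uw$-edges, every one of which we delete; and planarity holds because $G'$ is a minor of the planar graph $G$, obtained by a single edge contraction followed by deletions.

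The second and central task is to check that $G$ is itself an $n$-edge blowout of $G'$ at $v$. Here I would use the planar embedding of $G$ directly. Contracting one of the $uw$-edges and deleting the loops created by the remaining $n-1$ edges yields a planar embedding of $G'$ in which the edges formerly incident to $u$ and the edges formerly incident to $w$ appear as two \emph{contiguous} arcs in the cyclic (rotation) order around $v$: concatenating the two linear orders at the contracted edge and then removing every end of a deleted $uw$-edge leaves the $u$-edges in one block and the $w$-edges in the next. Consequently the partition of the edges at $v$ into ``$u$-edges'' and ``$w$-edges'' is exactly of the type demanded by the blowout definition. Splitting $v$ into $v' = u$ and $v'' = w$ along this partition is planar, and inserting $n$ parallel edges between $v'$ and $v''$ in the face created by the split reproduces $G$ as a planar graph, so $G$ is a legal $n$-edge blowout of $G'$.

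With these two facts in hand I would run the induction. The base case is a single vertex ($k = 0$), built by the empty sequence of blowouts. For the inductive step, given $G$ with $k+1 \geq 2$ vertices, connectedness guarantees an edge $e = uw$; forming $G'$ as above produces a connected planar loopless graph with $k$ vertices, which by the inductive hypothesis is built from a single vertex by a sequence of blowouts of types $n_1, \dots, n_{k-1}$. Appending the $n$-edge blowout that recovers $G$ then yields the desired sequence $n_1, \dots, n_{k-1}, n_k$ with $n_k = n$, completing the induction. Note that since $u$ and $w$ are adjacent we always have $n_k \geq 1$, which is consistent with the fact that every intermediate graph must stay connected (a blowout adds edges only between the split pair).

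The step I expect to be the main obstacle is the embedding bookkeeping in the second task. One must confirm carefully that identifying $u$ and $w$ really does leave the two edge-classes as contiguous arcs around $v$, so that the reverse split is planar, and that the $n$ reinserted edges can always be drawn without crossings. The flexibility built into the blowout definition---that the $n$ new edges may be added in any way yielding a planar graph---is exactly what is needed, since the outcome $G$ is planar by hypothesis, and since $G'$ is planar it may be re-embedded in the particular embedding for which the blowout reconstructs $G$. Making this compatibility precise across the inductive steps is the only delicate point; everything else reduces to the routine verification that contraction and deletion preserve connectivity, looplessness, and planarity.
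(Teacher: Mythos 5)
Your proof is correct and takes essentially the same approach as the paper's: induction on the number of vertices, collapsing two adjacent vertices (identifying them and deleting all parallel edges between them) and observing that this collapse is exactly the inverse of an $n$-edge blowout. The only difference is one of detail: the paper simply asserts that the collapse is ``the dual operation'' to a blowout, while you verify this explicitly with the rotation-system argument showing the two edge classes remain contiguous around the merged vertex.
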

\begin{proof} We use induction on the number of vertices in $G$.  Any connected planar loopless graph, $G$, with two vertices is simply an $n$-edge blowout on a single vertex, where $n$ is the number of edges in $G$.
Let $G$ be a finite connected planar loopless graph $G$ with $k+1$ vertices.  Pick any two adjacent vertices, call them $v_1$ and $v_2$.  Then $v_1$ and $v_2$ will have some number of edges (greater than or equal to 1) between them.  Call this number $n_k$.  Collapse $G$ by identifying $v_1$ and $v_2$ and deleting all $n_k$ edges between $v_1$ and $v_2$.    This collapse, being the dual operation to an $n_k$-edge blowout, results in a new connected planar loopless graph $G'$ with $k$ vertices.  So by induction there is a sequence of $n_1, n_2, \dots, n_{k-1}$-edge blowouts that can be used to build $G'$ from a single vertex.  Thus $n_1, n_2, \dots, n_{k-1}, n_k$-edge blowouts can be applied to build $G$.
\end{proof}

We note that the sequence of blowouts used to build a graph $G$ is not unique, as seen in Figure~\ref{cyl}.

\begin{figure}[htbp]
\begin{center}
\includegraphics[scale=.3]{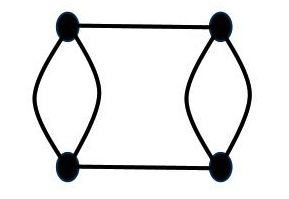}
\end{center}
\caption{{\bf A graph built from a single vertex via a $4, 1, 1$ blowout sequence or a $2, 2, 2$ blowout sequence.}}\label{cyl}
\end{figure}

Now, we return to our quest for examples of graphs containing two edge-disjoint spanning trees.

\begin{theorem}\label{2blow}
Suppose shadow diagram $D$ corresponds to a graph $G$. Let $G$ be a graph that can be constructed from a single vertex by a sequence of $2$-edge blowouts. Then $G$ contains two edge-disjoint spanning trees. Thus, $D$ is a $\mathcal{P}$-position game.
\end{theorem}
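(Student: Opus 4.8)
The plan is to prove directly that $G$ has two edge-disjoint spanning trees and then invoke Theorem~\ref{spanning} to conclude that $D$ is a $\mathcal{P}$-position game. I would argue by induction on the number of $2$-edge blowouts used to construct $G$, equivalently on the number of vertices $v$. For the base case, a single $2$-edge blowout of the initial vertex produces two vertices joined by two parallel edges; each edge by itself is a spanning tree, and the two are edge-disjoint. (The trivial single-vertex graph is the degenerate base, for which both spanning trees are the lone vertex.)

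For the inductive step, suppose $G'$ is obtained from $G$ by a $2$-edge blowout at a vertex $v$, splitting $v$ into $v'$ and $v''$, distributing the edges formerly at $v$ between them according to the chosen partition, and adding two new parallel edges $a$ and $b$ between $v'$ and $v''$. By the inductive hypothesis $G$ is the union of edge-disjoint spanning trees $T_1$ and $T_2$. Each old edge of $G$ corresponds to a unique edge of $G'$ (an edge formerly incident to $v$ now meets $v'$ or $v''$), so I may regard $T_1$ and $T_2$ as edge-disjoint subgraphs $H_1, H_2$ of $G'$ that avoid $a$ and $b$.

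The key step is to understand the structure of each $H_i$. Since $T_i$ is a tree and splitting a vertex cannot create a cycle, $H_i$ is a forest; it has $v(G') = v(G)+1$ vertices and $v(G)-1 = v(G')-2$ edges, so it has exactly two components. Moreover $v'$ and $v''$ must lie in \emph{different} components: a path between them in $H_i$ would, upon re-identifying $v'$ and $v''$ to recover $T_i$, close up into a cycle, contradicting that $T_i$ is a tree. Hence one component of $H_i$ contains $v'$ and the other contains $v''$. Adjoining a single new edge across this cut therefore yields a spanning tree of $G'$, so I set $T_1' = H_1 \cup \{a\}$ and $T_2' = H_2 \cup \{b\}$. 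These are edge-disjoint, since $H_1$ and $H_2$ are edge-disjoint and $a \neq b$ lie in neither $H_i$. This completes the induction, and Theorem~\ref{spanning} finishes the proof.

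I expect the only real subtlety to be the structural claim that each $H_i$ splits into exactly two components separating $v'$ from $v''$; everything else is bookkeeping on vertex and edge counts. The point to verify carefully is the edge-count identity forcing \emph{exactly} two (rather than merely at least two) components, which relies on $H_i$ being a genuine forest, together with the no-cycle argument placing $v'$ and $v''$ in distinct components so that exactly one new edge reconnects each tree.
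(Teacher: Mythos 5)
Your proof is correct and takes essentially the same approach as the paper: the paper's proof simply asserts that choosing one edge from each $2$-edge blowout yields a spanning tree (with the complementary choices forming the other), and your induction constructs exactly these trees---$T_1'$ and $T_2'$ each pick up one new edge per blowout. The forest/component-counting argument you flag as the key subtlety is precisely the detail the paper dismisses as ``easy to verify.''
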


\begin{proof} It is easy to verify that if one chooses a single edge coming from each 2-edge blowout in the blowout sequence, the resulting collection of edges defines a spanning tree. Similarly, the complement is a spanning tree.   
\end{proof}

The following lemma, regarding the $\mathcal{N}$-position games along the lines $e=2v-1$ and $e=2v-3$ in Figure~\ref{GameGrid}, is easier to state now with the language of blowouts.

\begin{corollary} Suppose  shadow diagram $D$ corresponds to a graph $G$.  Suppose $G$ is a $\mathcal{P}$-position game.  Then any planar graph obtained from $G$ by performing a 1-blowout or adding a new edge between two existing vertices will result in a graph that is an $\mathcal{N}$-position game.
\end{corollary}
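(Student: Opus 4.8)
The plan is to show directly that the modified graph, call it $G'$, is an $\mathcal{N}$-position by verifying that whoever moves first on $G'$ can force a win. The starting point is a parity count. Since $G$ is a $\mathcal{P}$-position, the remark following Corollary~\ref{ve-cor} forces $e = 2(v-1)$, an even number. Each of the two permitted operations---a $1$-blowout, or adding a single edge between two existing vertices---increases the edge count by exactly one, so $G'$ has an odd number of edges $2v-1$. (This places $G'$ on one of the blue lines $e=2v-1$ or $e=2v-3$ of Figure~\ref{GameGrid}, but mere membership on those lines is not enough, since they also contain $L$-position graphs; the specific construction is what we will exploit.)

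First I would dispatch the easy direction. If $L$ moves first on $G'$, then because $G'$ has an odd number of edges, $L$ plays the final move, and so $L$ wins by Lemma~\ref{L-adv}(2). Hence the first player wins whenever that first player is $L$, with no further work required.

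The heart of the argument is the case in which $K$ moves first. Here $K$'s strategy is to spend the opening move \emph{undoing} the modification, returning the board to the original $\mathcal{P}$-position $G$ and thereby handing the first-player role on $G$ to $L$. Concretely, if $G'$ arose by adding an edge $e_0$ between two existing vertices, then $K$ deletes $e_0$; if $G'$ arose by a $1$-blowout (splitting $w$ into $w'$ and $w''$ joined by a new edge $f$), then $K$ contracts $f$, re-merging $w'$ and $w''$. In either case the resulting board is exactly $G$, and it is now $L$'s turn, so $K$ is the \emph{second} player on a $\mathcal{P}$-position graph. By the definition of a $\mathcal{P}$-position, $K$ then has a winning strategy. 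Combining the two cases, the first player always wins on $G'$, so $G'$ is an $\mathcal{N}$-position.

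The one point requiring care---the main obstacle---is the legality of $K$'s opening reversal move: a smoothing is a viable (non-losing) move for $K$ only if it leaves both the black and white graphs connected, and this is exactly where the choice between deletion and contraction matters. I would argue that since the reverted board is literally the $\mathcal{P}$-position graph $G$, which as a valid $\mathcal{P}$-position shadow has both its black and white graphs connected, $K$'s first move cannot disconnect the shadow. For the added-edge case, deletion of $e_0$ yields $G$, whereas contraction would instead identify the two endpoints and need not recover $G$; so deletion is the correct move. For the blowout case, contraction of $f$ yields $G$, whereas deletion of $f$ could sever $w'$ from $w''$; so contraction is the correct move. Verifying that these two move-choices genuinely restore $G$ (and hence preserve connectivity of both graphs) is the only step needing attention; everything else is parity bookkeeping together with appeals to the definitions of the outcome classes and to Lemma~\ref{L-adv}.
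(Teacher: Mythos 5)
Your proof is correct and follows essentially the same strategy as the paper's: after the parity observation that the modified graph has an odd number of edges (so only the case of $K$ moving first needs attention), the key move is for $K$ to undo the modification---contracting the blowout edge or deleting the added edge---leaving the $\mathcal{P}$-position graph $G$ with $L$ to move. Your additional care about which operation (deletion vs.\ contraction) legally restores $G$ is a worthwhile elaboration of what the paper states tersely, but it is the same argument.
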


\begin{proof}  Let the planar graph obtained from $G$ by performing a 1-blowout or adding a new edge between two existing vertices be called $G_*$.  As $G_*$ must have an odd number of edges we must only consider the case when $K$ moves first and show she has a winning strategy.  Clearly, she can contract the blowout edge or delete the added edge within $G_*$, thus leaving the $\mathcal{P}$-position graph $G$ for $L$'s move.  
\end{proof}


\section{Connect Sums.}\label{connect}

An elementary operation that can be performed on two knot diagrams or on their shadows is to take a \emph{connect sum}. A connect sum $D_1\# D_2$ is obtained from the two diagrams $D_1$ and $D_2$ by removing a small arc segment from an exterior arc of each diagram and connecting the two diagrams at the newly-formed endpoints by a set of parallel curves, as in Figure~\ref{connectsum}.

\begin{figure}[htbp]
\begin{center}\includegraphics[scale=.17]{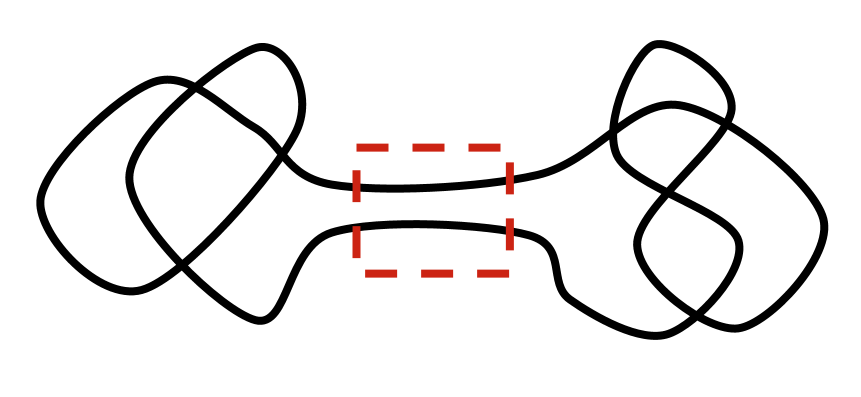}\end{center}

\begin{center}\includegraphics[scale=.17]{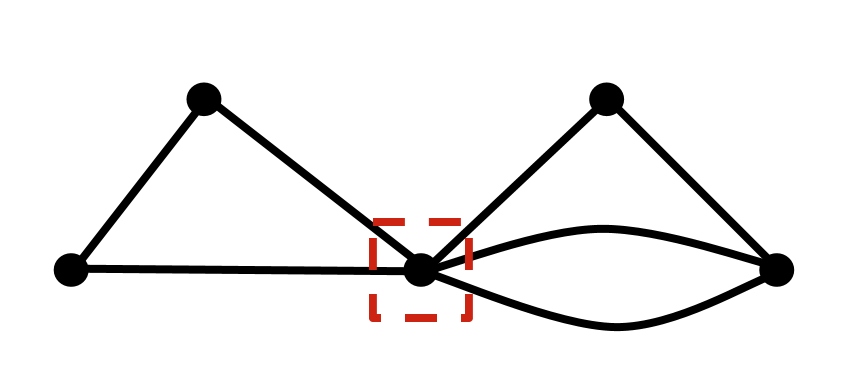}\end{center}
\caption{ {\bf A connect sum of trefoil and figure eight shadows and the associated black graph.}}\label{connectsum}
\end{figure}

Notice that if we look at the black or white graphs instead of the link shadow diagrams, we see that the connect sum operation identifies an exterior vertex of one graph with an exterior vertex of the other graph.  If $G_1$ and $G_2$ are the respective black graphs for $D_1$ and $D_2$, we write $G_1 \vee G_2$ for the wedge of graphs associated to the connect sum.
We immediately have the following result.

\begin{proposition} Suppose that $D_1$ and $D_2$ are link shadow diagrams.
\begin{enumerate}
\item If $D_1$ and $D_2$ are both $\mathcal{P}$-position, then $D_1\# D_2$ is $\mathcal{P}$-position.
\item If $D_1$ is  $\mathcal{P}$-position and $D_2$ is $\mathcal{N}$-position, then $D_1\# D_2$ is $\mathcal{P}$-position.
\item If $D_1$ and $D_2$ are both $\mathcal{N}$-position, then $D_1\# D_2$ is $L$-position.
\item Any connect sum with an $L$-position summand is $L$-position.
\end{enumerate}
\end{proposition}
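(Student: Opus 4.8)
My plan is to treat each connect sum as a pair of essentially independent subgames and to assemble whole-game strategies out of strategies for the two pieces. The first step is to record the structural fact that drives everything. By the wedge description above, the black graph of $D_1\#D_2$ is $G_1\vee G_2$ and, dually, its white graph is the wedge of the two white graphs, so the black graph (or its dual) becomes disconnected exactly when one of the summand graphs (or its dual) does. Hence $D_1\#D_2$ is disconnected if and only if at least one of the two summands has been disconnected; and since contraction never disconnects a connected graph while a disconnected graph cannot be reconnected by further deletions or contractions, a disconnection is permanent. Thus $K$ wins the connect sum precisely when $K$ keeps \emph{both} summands connected, while $L$ wins as soon as she disconnects \emph{either} one, and every legal move lives in a single summand.

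With this dictionary the engine of all four parts is a respond-in-the-same-summand (pairing) strategy, combined with the parity bookkeeping of Lemma~\ref{L-adv}: each $\mathcal{P}$-summand carries an even number of edges and each $\mathcal{N}$-summand an odd number, so I can read off who is forced to make the last smoothing of the whole connect sum and invoke Lemma~\ref{L-adv} for free in the direction where $L$ plays last. For part (1) both summands are $\mathcal{P}$ and the total edge count is even; if $L$ moves second she plays last and wins by Lemma~\ref{L-adv}, while if $L$ moves first I have $K$ answer each of $L$'s moves in the same summand using that summand's second-player $\mathcal{P}$-strategy. Each summand then sees the alternation $L,K,L,K,\dots$ beginning with $L$, so $K$'s $\mathcal{P}$-strategy keeps it connected to the end and a response is always available; $K$ wins as second player. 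Part (4) is the mirror image: if $D_1$ is an $L$-position summand, $L$ simply runs her $D_1$-winning strategy and treats $K$'s moves in $D_2$ as moves she answers there, disconnecting $D_1$ no matter how the two summands are interleaved. The independence of the summands is exactly what legitimizes ignoring $D_2$.

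For part (2) the natural route is reduction to part (1). The total edge count is odd, so the player who opens plays last; the player trying to keep the connect sum connected opens by playing the winning first move inside the $\mathcal{N}$-summand, which converts it into a $\mathcal{P}$-position and leaves two $\mathcal{P}$-summands with the opponent to move, whereupon part (1) finishes the job. Part (3) splits the same way: with both summands $\mathcal{N}$ the total is even, so $L$ moving second plays last and wins by Lemma~\ref{L-adv}, and the real content is a first-move winning line for $L$, which I would obtain by having $L$ commit to one summand, carry out its first-player winning strategy there, and use the other summand to absorb the opponent's moves.

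The step I expect to be the main obstacle is precisely this tempo management across summands in the ``active'' directions of parts (2) and (3). A summand strategy is a response strategy, so the committed player must never be forced to \emph{initiate} in the summand she means to control; the danger is that the \emph{odd} edge count of an $\mathcal{N}$-summand leaves an unpaired move that hands the initiative back at the wrong moment. I would tame this by inducting on the total number of edges $e_1+e_2$: once an opening move reduces an $\mathcal{N}$-summand to a $\mathcal{P}$-summand, the remaining position is a connect sum of strictly smaller total size whose class is governed by parts (1)--(3), and the parity accounting of Lemma~\ref{L-adv} already pins down which player benefits from the final move, so only one winning strategy need be verified at each inductive step. Making the induction airtight — in particular checking that the pairing never strands the committed player with the wrong parity — is where the genuine work lies; the base cases, summands with a single crossing, are immediate.
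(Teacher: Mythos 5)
Your setup (wedge of graphs, independence of summands) and your treatments of parts (1) and (2) match the paper: part (1) by the same-summand pairing response, and part (2) by exactly the paper's reduction (the connected-player's opening move converts the $\mathcal{N}$-summand into a $\mathcal{P}$-summand, reducing to part (1)). The genuine gap is part (3), and it is not a technicality you can ``tame'' later: the strategy-stitching you propose is simply false there. On an odd-edge $\mathcal{N}$-position summand, $L$'s ``first-player winning strategy'' is nothing more than the guarantee of Lemma~\ref{L-adv} that she makes the last move; since the summand is $\mathcal{N}$ and not $L$-position, it carries no promise of any disconnection before that final move. In the wedge, $K$ can steal that final move with a parity response: he replies in whichever summand currently has an odd number of unplayed edges (his first reply is in the summand $L$ did \emph{not} open in, and thereafter he answers in the same summand as $L$). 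Then $K$, not $L$, plays the last edge of \emph{each} summand, and $L$'s guarantee evaporates. Concretely, let $G_1=G_2$ be a triple edge ($v=2$, $e=3=2v-1$, an $\mathcal{N}$-position). Deleting an edge of $G_1$ is a legitimate opening of a winning line for the game on $G_1$ alone, but in the wedge $K$ answers by deleting an edge of $G_2$, leaving a wedge of two double edges --- a union of two edge-disjoint spanning trees, hence $\mathcal{P}$-position by Theorem~\ref{spanning} --- with $L$ to move, and $K$ wins.

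Your fallback induction does not repair this; it points in the losing direction. Reducing an $\mathcal{N}$-summand to a $\mathcal{P}$-summand is precisely the bad opening: the resulting position is $\mathcal{P}\vee\mathcal{N}$ with $K$ to move, and part (2) --- whose proof literally shows ``$K$ has a winning strategy on $G_1\vee G_2$ moving first'' --- says this is a win for $K$. (Your reading of that reduced position as favorable comes from applying the standard definition of $\mathcal{P}$-position, under which part (2) as stated is impossible anyway: no odd-edge position can be $\mathcal{P}$, by Lemma~\ref{L-adv}; the paper's labels in this proposition track only the games in which $K$ plays last.) What the paper actually does for part (3), and what your outline never uses, is the counting classification: by Theorem~\ref{count} and Corollary~\ref{ve-cor}, an $\mathcal{N}$-position summand must satisfy $e=2v-1$ or $e=2v-3$, and $L$'s correct opening is to \emph{contract} an edge on the line $e=2v-1$ (or delete one on the line $e=2v-3$), which by Theorem~\ref{count} turns that summand into an $L$-position graph, whereupon part (4) finishes. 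In the example above this is exactly the difference between contracting (producing a double loop, $L$-position, winning) and deleting (producing a double edge, $\mathcal{P}$-position, losing). So part (3) genuinely requires the count-based results; no purely strategic stitching of summand strategies of the kind you propose can work, because $K$'s parity response defeats it. A smaller remark: your part (4) suffers the same tempo issue you flag elsewhere ($K$ can exhaust $D_2$ and force $L$ to move out of turn in $D_1$), though the paper likewise leaves that case as ``straightforward.''
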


\begin{proof} The proofs of cases 1 and 4 are straightforward. Below, we reduce the case of the second assertion to that of the first and reduce the third assertion to the fourth. 

Suppose that $D_1$ is  $\mathcal{P}$-position and $D_2$ is $\mathcal{N}$-position as in case 2. This implies that $G_1$ has an even number of edges and $G_2$ has an odd number of edges. It suffices to prove $K$ has a winning strategy on $G_1\vee G_2$ moving first. Knot's first move in $G_1\vee G_2$ will correspond to the move in $G_2$ that produces a $\mathcal{P}$-position shadow from $D_2$. This reduces the game to case 1.

Next suppose that  $D_1$ and $D_2$ are $\mathcal{N}$-position as in 3.  This implies that the vertex and edge counts for $G_1$ and $G_2$ satisfy either $e=2v-1$ or $e=2v-3$.  Theorem~\ref{count} and Corollary~\ref{ve-cor} imply the graph $G_1 \vee G_2$ must be $L$-position if at least one of the two graphs satisfies $e=2v-3$.  In the case when both $G_1$ and $G_2$ satisfy  $e=2v-1$, we need only consider the case where Link moves first and the total edge count is even.  Suppose Link's first move is to contract any edge in say $G_1$.  We denote the the resulting graph as $H \vee G_2$, as it is still a one point union of $G_2$ with some graph $H$ that comes from applying the contraction to $G_1$.  Now, the edge and vertex count of $H$ imply it is the graph of an $L$-position shadow.  Thus we have reduced the third case to the fourth.
\end{proof}

 \section{Future Work.}\label{future}
We have made significant progress in determining the outcome classes of link shadows for the Link Smoothing Game. To have a complete classification of link shadows, more work must be done with graphs that have vertex--edge pairs lying on the three lines pictured in Figure~\ref{GameGrid}. Specifically, we would like to prove or disprove that  $\mathcal{P}$-position graphs are exactly those that are the union of two edge-disjoint spanning trees.

In addition to completing the classification of this game, we would like to consider a generalization of the Link Smoothing Game which we refer to as the $n$-Connected Link Smoothing Game. Here, the two players are allowed to smooth precrossings on a disjoint union of link shadows, but one player wants the game to result in $n$ or fewer connected components while the other player wants more than $n$ connected components.  The Link Smoothing Game is then the $n$-Connected Link Smoothing Game for $n=1$.  

The $n$-Connected Link Smoothing Game could be of additional interest because it gives a filtration of the set of all link shadows and disjoint unions of link shadows.   Let $P_n$ denote the set of all $\mathcal{P}-$position shadows for the $n$-Connected Link Smoothing Game and $P_0 = \emptyset$.  Clearly, if a link shadow is in $P_n$, then it is also in $P_{n+k}$ for all $k \geq 1$.   Also, every link shadow with $n$ crossings is in $P_{n+1}$ because it can be disconnected into at most $n+1$ pieces.  Thus it may be interesting to study  the sets $P_{n} \setminus P_{n-1}$ for all $n\geq 1$, in order to determine for a given link shadow the smallest $n$ for which $P_n$ contains that shadow. 
 
 \section{Acknowledgements.}
The authors would like to thank Louis Kauffman for his suggestions regarding the idea for this game and Jonathan Zung for his insightful idea for Theorem~\ref{spanning}.

\bibliographystyle{amsalpha}

 \end{document}